\newcommand{\refcheckize}[1]{%
	\expandafter\let\csname @@\string#1\endcsname#1%
	\expandafter\DeclareRobustCommand\csname relax\string#1\endcsname[1]{%
		\csname @@\string#1\endcsname{##1}\wrtusdrf{##1}}%
	\expandafter\let\expandafter#1\csname relax\string#1\endcsname
}
\newcounter{@ToDo}
\newcommand{\todo@helper}[1]{%
	({\color{blue}TODO~\arabic{@ToDo}: {#1\@addpunct{.}}})%
}
\newcommand{\todo}[1]{\stepcounter{@ToDo}%
	\relax\ifmmode\text{\todo@helper{#1}}%
	\else\todo@helper{#1}\fi%
}
\newtheorem{thm}{Theorem}
\newtheorem{lem}[thm]{Lemma}
\newtheorem{cor}[thm]{Corollary}
\newtheorem{prop}[thm]{Proposition}
\theoremstyle{remark}
\DeclarePairedDelimiter\parentheses{\lparen}{\rparen}
\DeclarePairedDelimiter\floor{\lfloor}{\rfloor}
\DeclarePairedDelimiter\ceil{\lceil}{\rceil}
\DeclarePairedDelimiter\braces{\lbrace}{\rbrace}
\DeclarePairedDelimiter\abs{\lvert}{\rvert}
\newcommand{\NN}{\mathbb{N}}
\newcommand{\ZZ}{\mathbb{Z}}
\newcommand{\FF}{\mathbb{F}}
\newcommand{\smallDelta}{\delta_0}
\newcommand{\scrI}{\mathscr{I}}
\newcommand{\fS}{\mathfrak{S}}
\newcommand{\mand}{\qquad\mbox{and}\qquad}
\newcommand{\dd}[1]{\mathop{\mathrm{d}#1}}
\DeclareMathOperator{\eOpname}{e}
\NewDocumentCommand\e{ s O{} m o }{
	\IfBooleanTF{#1}{%
		\eOpname_{#2}\parentheses*{#3}%
	}{%
	\IfNoValueTF{#4}{%
		\eOpname_{#2}\parentheses{#3}
	}{%
		\eOpname_{#2}\parentheses[#4]{#3}
	}}%
}
\numberwithin{thm}{section}
\numberwithin{equation}{section}
\title{Kloosterman sums with twice-differentiable functions}
\date{\today}
\subjclass[2010]{%
	Primary
	11B83; 
	Secondary
	11L05, 
	11L07. 
}
\keywords{Piatetski-Shapiro sequence; Kloosterman sum.}
\begin{document}
\author{Igor~E.~Shparlinski}
\address{Igor~E.~Shparlinski\\%
	School of Mathematics and Statistics\\%
	The University of New South Wales\\%
	Sydney NSW~2052\\%
	Australia}
\email{igor.shparlinski@unsw.edu.au}

\author{Marc~Technau}
\address{Marc~Technau\\%
	Institute of Analysis and Number Theory\\%
	Graz University of Technology\\%
	Kopernikusgasse~24\\%
	8010~Graz\\%
	Austria}
\email{mtechnau@math.tugraz.at}


\begin{abstract}
	We bound Kloosterman-like sums of the shape
	\[
		\sum_{n=1}^N \exp(2\pi i (x \floor{f(n)}+ y \floor{f(n)}^{-1})/p),
	\]
	with integers parts of a real-valued, twice-differentiable function $f$ is  satisfying a certain limit condition on $f''$, and $\floor{f(n)}^{-1}$ is meaning inversion modulo~$p$.
	As an immediate application, we obtain results concerning the distribution of modular inverses inverses $\floor{f(n)}^{-1} \pmod{p}$. 	The results apply, in particular, to  Piatetski-Shapiro sequences $ \floor{t^c}$ with $c\in(1,\frac{4}{3})$.  
	The proof is an adaptation of an argument used by Banks and the first named author in a series of papers from~2006 to~2009.
\end{abstract}
\maketitle

\section{Introduction and main result}

\subsection{Background and motivation}

The \emph{Piatetski-Shapiro sequence} associated with $c\in(1,2)$ is defined by $(\floor{n^c})_{n\in\NN}$, where $\floor{x} = \min\braces{ m\in\ZZ : m\leq x }$ denotes the floor function. Such sequences are named in honour of Pyatecki\u{\i}-\v{S}apiro~\cite{pyateckii-sapiro1953on-the-distribution}\footnote{Nowadays his name is usually spelled as Piatetski-Shapiro.} who, at the suggestion of A.~O.~Gelfond, has proved the following prime number theorem:
\begin{equation}\label{eq:Piatetski-Shapiro:PNT}
	\#\braces{ n \leq N:~\floor{n^c} \text{ prime} }
	= (1 + o(1)) \frac{N}{c \log N}
	\qquad \text{as } N\to\infty
\end{equation}
for $c$ in the range $1<c<\frac{12}{11}$. Such a result may be viewed as an intermediate step to tackling the problem of investigating the number of primes represented by a fixed quadratic polynomial; consider for instance Landau's famous problem of ascertaining whether $n^2+1$ is prime for infinitely many $n\in\NN$. Informally speaking, the upper bound for the exponents $c$ for which one is able to establish~\cref{eq:Piatetski-Shapiro:PNT} measures the progress towards quadratic polynomials. To date, the largest admissible $c$-range seems to be $1<c<\frac{2817}{2426}$ due to Rivat and Sargos~\cite{rivat2001nombres-premiers} (see also the references to the previous record holders they give in their paper).
Naturally, also lower bound sieves have been employed, and the corresponding current record is a version 
of~\cref{eq:Piatetski-Shapiro:PNT} with a lower bound of the right order of magnitude instead of an asymptotic formula and $1<c<\frac{243}{205}$ due to Rivat and Jie~\cite{rivat2001prime-numbers}.

Investigations into arithmetic properties of Piatetski-Shapiro sequences are not confined to studying prime values; they have been studied with respect to various other topics, including, but not limited to, the following:
\begin{itemize}
	\item smooth, rough, and square-free numbers~\cite{baker2013,akbal2018,akbal2017},
	\item almost-primes~\cite{baker2014,banks2016},
	\item additive problems \cite{mirek2015,akbal2016,lu2018},
	\item intersection with special sequences~\cite{banks2009multiplicative,banks2008,baker2015,akbal2015,liu2017},
	and
	\item digital expansions~\cite{mauduit1995repartition,spiegelhofer2014piatetski-shapiro,muellner2017normality}.
\end{itemize}
For a broader picture of the scope of each topic, we refer the interested reader to the references within the cited items.

Here we study a question of distribution of modular 
inverses modulo a prime $p$ of Piatetski-Shapiro sequences and, in fact, 
more general sequences. Our motivation comes from~\cite{technau2018modular-hyperbolas}, where the distribution of 
inverses modulo a prime $p$ of {\it Beatty sequences\/} is considered.
Furthermore, since this question immediately leads 
us to a problem of estimating \emph{Kloosterman-like} sums with Piatetski-Shapiro sequences, this serves as an
additional motivation. Indeed,  this is an additive analogue of the results from~\cite{banks2009multiplicative}, which concern
bounds for character sums of the form  
\begin{equation}\label{eq:CharacterSum}
	\sum_{n=1}^N \chi(\floor{f(n)})
	\qquad (N\geq 1),
\end{equation}
where $f$ is a real-valued, twice-differentiable function satisfying a certain limit condition on $f''$ (see~\cref{eq:2ndDerivativeGrowth} below), $\chi$ is a non-trivial multiplicative character modulo a prime $p$, which is assumed to be in a suitable range with respect to $N$.
The results from~\cite{banks2009multiplicative} apply, in particular, to power functions $f(t) = t^c$ with $c\in(1,\frac{4}{3})$ and apply, amongst other things, to bounding the least quadratic non-residue in Piatetski-Shapiro sequences (see also~\cite{banks2008,baker2015}).

\subsection{Notation}

Throughout the paper, the notation $U = O(V)$, 
$U \ll V$ and $ V\gg U$  are equivalent to $|U|\leq c|U| $ for some positive constant $c$, 
which, may occasionally depend on the function $f$ and on the small positive real 
parameter $\varepsilon$ and on the positive integer parameter $k$.  We use subscripts to indicate such 
dependencies.

We always use $p$ to denote a  prime number and then for  $x,y,u\in\FF_p = \ZZ/p\ZZ$, we define
\begin{equation}\label{eq:psi:definition}
	\psi_{x,y;\,p}(u) = \begin{cases}
		\e[p]{xu+yu^{-1}} & \text{if } u\neq 0, \\
		0 & \text{otherwise},
	\end{cases}
\end{equation}
where  $\e[p]{a} = \exp(2\pi i a / p)$ and the calculation inside the argument of $\e[p]{\,\cdot\,}$ is to be performed in $\FF_p$.

\subsection{Main results} 

In this paper, we first outline how to adapt the arguments from~\cite{banks2009multiplicative} to bound exponential sums with $\psi_{x,y;\,p}(\floor{f(n)})$ which immediately provide some non-trivial information about the distribution 
of modular inverses $\floor{f(n)}^{-1} \pmod p$.  
We remark that   for the scenario 
of~\cite{banks2006nonresidues,banks2006short-character-sums,banks2007prime-divisors},  
any non-trivial bound on the character sum implies the desired result about the distribution of quadratic 
non-residues.
We also use the opportunity to provide 
slightly more precise information about the 
dependence of our saving $\delta$ on the parameter $\kappa$, characterising the growth of $f$. In particular, 
the explicit formula~\cref{eq:delta} below shows that $\delta$ is a monotonic function  of $\varepsilon$ and $\kappa$.

Our main result may be stated as follows:
\begin{thm}
	\label{thm:main:result}
	Let $\frac{2}{3} < \kappa < 1$, and let $f$ be a real-valued, twice-differentiable function such that
	\begin{equation}\label{eq:2ndDerivativeGrowth}
		\lim_{t\to\infty} \frac{\log f''(t)}{\log t} = -\kappa.
	\end{equation}
	Then, for a sufficiently small $\varepsilon>0$, for all $\psi_{x,y;\,p}$, with $p$ prime and $x,y,u\in\FF_p$, as in~\cref{eq:psi:definition} and all integers $N$ in the range
       \begin{equation}\label{eq:ineq N}
		p^{1/(2\kappa)+\varepsilon} \leq N < p^{1/(2-\kappa)},
	\end{equation}
	the uniform bound
	\begin{equation}\label{eq:main:result}
		\sum_{n=1}^N \psi_{x,y;\,p}(\floor{f(n)})
		\ll_{\varepsilon, f} Np^{-\delta}
	\end{equation}
	holds with
	\begin{equation}\label{eq:delta}
		\delta = 2^{-11} \varepsilon^2 \kappa^4.
	\end{equation}
\end{thm}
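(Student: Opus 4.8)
The plan is to estimate the Kloosterman-like sum in \cref{eq:main:result} by first reducing the problem to a sum over a Piatetski-Shapiro-type set of inputs and then applying the completion method together with a Weil-type bound for Kloosterman sums. Concretely, I would begin by rewriting the sum over $n\in\{1,\dots,N\}$ as a weighted sum over the integer values $m=\floor{f(n)}$. Since \cref{eq:2ndDerivativeGrowth} forces $f''(t)=t^{-\kappa+o(1)}$, the function $f$ is (eventually) monotone with $f'(t)\approx t^{1-\kappa}$, so the number of $n\leq N$ with $\floor{f(n)}=m$ is controlled, and the range of $m$ has length comparable to $Nf'(N)\approx N^{2-\kappa}$. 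Detecting the condition $\floor{f(n)}=m$ is the classical device: write it using the fractional-part indicator $\floor{f(n)}=m \iff \floor{-f(n)}+m\geq 0$ and $\floor{-f(n)}+m+1>0$, i.e.\ via $\{f(n)\}$ lying in a suitable interval, which I would encode through the Fourier expansion of the sawtooth/indicator function with a truncated series plus a controlled error term (a Vaaler or Erdős--Turán type approximation).

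Next I would exchange the order of summation, so that the main terms take the form
\begin{equation*}
	\sum_{h} \widehat{c}(h) \sum_{n=1}^N \e[p]{\ast}\, \e{hf(n)},
\end{equation*}
where the inner exponential sum couples the additive character $\e[p]{x\floor{f(n)}+y\floor{f(n)}^{-1}}$ with the harmonics $\e{hf(n)}$ coming from the expansion. The point is that after this manipulation the modular inverse $\floor{f(n)}^{-1}\pmod p$ is attached to the integer variable $m$, and summing $\psi_{x,y;\,p}(m)$ over a short interval of $m$'s is exactly a complete (or incomplete) Kloosterman sum. I would therefore complete the $m$-sum to a full residue system modulo $p$, paying the usual $\log p$ factor from the completion, and invoke the Weil bound $\bigl|\sum_{u\bmod p}\psi_{x,y;\,p}(u)\e[p]{tu}\bigr|\ll p^{1/2}$ uniformly in the extra phase $t$ (valid whenever $(x,y)\not\equiv(0,0)$; the degenerate cases are handled separately and are cheaper). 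This is precisely the step that mirrors the Banks--Shparlinski treatment of \cref{eq:CharacterSum}, with the Weil bound for Kloosterman sums playing the role that the Weil bound for multiplicative character sums played there.

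The remaining task is to balance the truncation parameter of the Fourier expansion against these two error contributions: longer truncation improves the approximation of the floor condition but increases the number of harmonics $h$, each of which costs a completed Kloosterman sum of size $\approx p^{1/2}$ times the number of $m$-intervals. I would bound the harmonic sums over $h$ using the second-derivative (van der Corput / Kusmin--Landau) estimates afforded by \cref{eq:2ndDerivativeGrowth}: the hypothesis $\kappa>\frac{2}{3}$ is exactly what makes $f''$ decay slowly enough that the relevant exponential sums $\sum_n \e{hf(n)}$ enjoy nontrivial cancellation, while $\kappa<1$ keeps $f'$ growing. Optimising the truncation length as a power of $p$, one collects a net saving of the form $p^{-\delta}$, and tracking the exponents carefully through the van der Corput estimate and the completion loss yields the explicit value \cref{eq:delta}. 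The upper and lower bounds on $N$ in \cref{eq:ineq N} are forced at this optimisation stage: the lower bound $N\geq p^{1/(2\kappa)+\varepsilon}$ guarantees that the completed Kloosterman sums beat the trivial bound (the range of $m$ is long enough relative to $p^{1/2}$), while the upper bound $N<p^{1/(2-\kappa)}$ keeps the range of $m$ below $p$ so that the completion is not wasteful.

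The main obstacle I anticipate is the simultaneous optimisation: one must choose the Fourier truncation length so that the approximation error, the completion error (with its $\log p$), and the van der Corput bound on the $h$-sums all stay below $Np^{-\delta}$ for a quantitatively explicit $\delta$. Getting the clean closed form \cref{eq:delta}, and in particular verifying that $\delta$ is monotone in both $\varepsilon$ and $\kappa$, requires keeping every exponent of $p$ symbolic rather than merely showing existence of some positive saving; this bookkeeping, rather than any single deep inequality, is where the real care lies.
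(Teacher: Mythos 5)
There is a genuine gap: your central step---detecting $m=\floor{f(n)}$ by a truncated Fourier (Vaaler) expansion and then ``completing the $m$-sum'' to apply the Weil bound---does not go through, because after the exchange of summation the modular-inverse factor $\psi_{x,y;\,p}(m)$ is coupled to the \emph{archimedean} harmonic $\e{hf^{-1}(m)}$ attached to the same variable $m$. Such a hybrid sum $\sum_{m\leq M}\psi_{x,y;\,p}(m)\e{hf^{-1}(m)}$ is \emph{not} an incomplete Kloosterman sum: completion requires everything except an interval indicator to be $p$-periodic, and $\e{hf^{-1}(m)}$ is not. If you instead split the $m$-range into pieces on which the smooth phase is essentially constant, the total phase variation is $\asymp hf^{-1}(M)\asymp hN$, so you pay $\gg(1+hN)p^{1/2}$ overall---already for $h=1$ this exceeds the target $Np^{-\delta}$. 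Even the stronger move of expanding $\psi_{x,y;\,p}$ into additive characters mod $p$ (paying the uniform Weil loss $p^{1/2}$ on each Fourier coefficient) and applying the second-derivative test to $\sum_{m\leq M}\e{hf^{-1}(m)+tm/p}$ produces terms of size about $p^{1/2}(hN)^{1/2}$, which is $\leq Np^{-\delta}$ only when $N$ is roughly $\geq p^{1/\kappa}$; this is incompatible with the theorem's upper bound $N<p^{1/(2-\kappa)}<p$. So your route fails throughout the admissible range \cref{eq:ineq N}, not merely at an endpoint. Relatedly, your heuristic for the lower bound is off: the length of the $m$-range is $M\asymp N^{2-\kappa}$, and $M\geq p^{1/2}$ only requires $N\geq p^{1/(2(2-\kappa))}$, which does not explain the exponent $1/(2\kappa)$.

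The actual proof is structurally of Burgess type, precisely because completion-style (P\'olya--Vinogradov-like) arguments cannot reach $N$ as small as $p^{1/(2\kappa)+\varepsilon}$. The paper decomposes the long sum into short sums over $(K,K+L]$, averages over shifts $n\mapsto n+h$, and uses a pigeonhole choice of $\xi_0$ so that $\floor{f(n+h)}=\floor{f(n)+\xi_0}+\floor{hf'(K)-\xi_0}$ with no carry (\cref{eq:carry}); this \emph{additively separates} the variables and reduces matters to the bilinear sums \cref{eq:double sum}, which are handled in \cref{lem:DoubleSumBound} by H\"older's inequality and the Weil bound for the rational functions $\sum_{r}(X+v_r)^{-1}-\sum_{s}(X+w_s)^{-1}$ (not for single Kloosterman sums). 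The exponent $1/(2\kappa)$ in \cref{eq:ineq N} arises because the value set $\mathscr{U}$ coming from a short interval of length $L\approx N^{\kappa}$ must satisfy $U\geq p^{1/2+\varepsilon}$ in \cref{cor:DoubleSumBound-eps}, i.e.\ $N^{\kappa}\geq p^{1/2+\varepsilon}$. If you want to salvage your write-up, the shift-averaging/bilinear mechanism must replace the Fourier-detection-plus-completion mechanism; no choice of truncation parameter repairs the latter here.
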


For an interval $\scrI = [K+1, K+H-1]\subseteq [0,p-1] $ 
with integers $K$ and $H$, we denote by $T_f(N,\scrI) $ the number 
of positive integers $n \le N$ for which the smallest positive residue 
$\floor{f(n)}^{-1} \pmod p$ falls in $\scrI$, that is
\[
	T_f(N,\scrI) = \braces*{ n:~ \floor{f(n)}^{-1} \equiv h \pmod p,\, 1\le n \le N,\, h \in \scrI }.
\]

\begin{cor}\label{cor:DistrInt}
	On the hypotheses of \cref{thm:main:result}, we have
		\[
		T_f(N,\scrI)  = \frac{HN}{p} + O_{\varepsilon, f}\parentheses{Np^{-\delta}\log p}.
	\]
where $\delta$ is given by~\cref{eq:delta}. 
\end{cor}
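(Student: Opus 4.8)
The plan is to write $T_f(N,\scrI)$ as a sum of additive characters modulo $p$ and reduce the off-diagonal frequencies to the exponential sum estimate of \cref{thm:main:result}. Detecting the congruence class of $\floor{f(n)}^{-1}$ by the orthogonality of additive characters, and using the convention $\psi_{0,a;\,p}(0)=0$ of \cref{eq:psi:definition} to absorb the (necessary) restriction to those $n$ with $\floor{f(n)}\not\equiv 0\pmod p$, I would record the exact identity
\[
	T_f(N,\scrI)=\frac1p\sum_{a=0}^{p-1}\parentheses*{\sum_{h\in\scrI}\e[p]{-ah}}\sum_{n=1}^N\psi_{0,a;\,p}(\floor{f(n)}),
\]
since $\psi_{0,a;\,p}(\floor{f(n)})=\e[p]{a\floor{f(n)}^{-1}}$ precisely when the inverse exists. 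The strategy is then to extract the main term from the frequency $a=0$ and to control the remaining frequencies by \cref{thm:main:result}.

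Writing $E$ for the number of $n\le N$ with $\floor{f(n)}\equiv 0\pmod p$, the term $a=0$ equals $\frac1p\,\#\scrI\,(N-E)=\frac{(H-1)(N-E)}{p}$, because $\psi_{0,0;\,p}$ is the indicator of $\floor{f(n)}\not\equiv 0$ and $\#\scrI=H-1$. Here the decisive technical point is that $E$ is small, and this is exactly where the \emph{upper} bound $N<p^{1/(2-\kappa)}$ of \cref{eq:ineq N} enters. The growth hypothesis \cref{eq:2ndDerivativeGrowth} gives $f'(t)\to\infty$ and $f(t)=t^{2-\kappa+o(1)}$; consequently the integers $\floor{f(n)}$ are eventually strictly increasing and lie in $[0,f(N)]$ with $f(N)=N^{2-\kappa+o(1)}<p^{1+o(1)}$. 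Hence at most one $n$ maps to each value, so that
\[
	E\le \frac{f(N)}{p}+O_f(1)\ll p^{o(1)},
\]
whence $\frac{(H-1)(N-E)}{p}=\frac{HN}{p}+O_{\varepsilon,f}\parentheses{N/p+E}=\frac{HN}{p}+O_{\varepsilon,f}(Np^{-\delta})$, using $N\ge p^{1/(2\kappa)+\varepsilon}$ to absorb both $N/p$ and $p^{o(1)}$. This produces the main term.

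For the frequencies $a\neq 0$ the mechanism is that each inner sum is exactly $\sum_{n=1}^N\psi_{0,a;\,p}(\floor{f(n)})$, which \cref{thm:main:result} bounds by $\ll_{\varepsilon,f}Np^{-\delta}$ \emph{uniformly} in $a$ (this uniformity in the pair $(x,y)$ is the one genuinely load-bearing input). Estimating the geometric sum by $\abs*{\sum_{h\in\scrI}\e[p]{-ah}}\ll\min\parentheses{\#\scrI,\norm{a/p}^{-1}}$, where $\norm{\,\cdot\,}$ denotes distance to the nearest integer, and invoking the classical bound $\sum_{a=1}^{p-1}\norm{a/p}^{-1}\ll p\log p$, the contribution of these frequencies is
\[
	\ll_{\varepsilon,f}\frac1p\parentheses*{\sum_{a=1}^{p-1}\norm{a/p}^{-1}}Np^{-\delta}\ll_{\varepsilon,f}Np^{-\delta}\log p.
\]
Combining this with the main term yields the corollary. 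I expect the only real obstacle to be the control of $E$: it cannot be obtained from the $\psi$-sum bound itself (which would be circular), and must instead be read off from the range restriction together with the monotonicity of $f$; the single factor $\log p$ in the error term is the unavoidable cost of completing the sum over $h\in\scrI$.
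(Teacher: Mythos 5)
Your proof is correct, and the delicate points all check out: the orthogonality identity is exact precisely because of the convention $\psi_{0,a;\,p}(0)=0$; the appeal to \cref{thm:main:result} is legitimate since you only invoke it with $x=0$, $y=a\neq 0$ (note the theorem cannot hold for $x=y=0$, where the sum has size about $N$, so the uniformity you rely on is exactly the uniformity in the nondegenerate pairs that the proof via \cref{lem:DoubleSumBound} actually delivers); and your bound $E\ll p^{o(1)}$, via $f(N)=N^{2-\kappa+o(1)}<p^{1+o(1)}$ together with the eventual strict monotonicity of $\floor{f(n)}$ forced by $f'(t)\to\infty$, is sound, as is the absorption of the $\#\scrI=H-1$ versus $H$ discrepancy using $N/p\le Np^{-\delta}$. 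Your route does differ from the paper's, though the difference is one of packaging rather than substance: the paper disposes of the corollary in one line by combining \cref{thm:main:result} with the Erd\H{o}s--Tur\'an inequality, i.e.\ it passes through a discrepancy bound for the points $\floor{f(n)}^{-1}/p$ with the harmonic weights $1/m$ supplying the $\log p$, whereas you complete the sum over all $p$ additive characters by hand and extract the $\log p$ from $\sum_{a=1}^{p-1}\norm{a/p}^{-1}\ll p\log p$. Both arguments reduce to the same input, namely the sums $\sum_{n\le N}\e[p]{a\floor{f(n)}^{-1}}$ bounded uniformly in $a\neq 0$, and both yield the identical error term; what Erd\H{o}s--Tur\'an buys is brevity and uniformity over all intervals at once, while your completion argument is more elementary and self-contained (an exact identity rather than an inequality) and has the merit of making explicit the bookkeeping that the paper's citation leaves implicit --- in particular the zero-frequency term and the count $E$ of those $n$ with $\floor{f(n)}\equiv 0\pmod p$, which, as you rightly observe, must be controlled by the range condition \cref{eq:ineq N} and monotonicity rather than by any exponential-sum bound.
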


\begin{cor}\label{cor:Exist}
	Let $f$ be a real-valued, twice-differentiable function such that~\cref{eq:2ndDerivativeGrowth} holds.
	There exists a constant $\xi>0$ which depends only on $\kappa$,  such that for 
	\[
		p\ge H,N \mand HN \ge p^{2-\xi}
	\]
	we have $T_f(N,\scrI) > 0$.   
\end{cor}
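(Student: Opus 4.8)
The plan is to read off the positivity of $T_f(N,\scrI)$ directly from the asymptotic formula of \cref{cor:DistrInt}, by arranging for the main term $HN/p$ to swamp the error term $O_{\varepsilon,f}(Np^{-\delta}\log p)$. First I would fix an admissible $\varepsilon = \varepsilon(\kappa) > 0$, small enough that \cref{thm:main:result} applies; then, by \cref{eq:delta}, $\delta = 2^{-11}\varepsilon^2\kappa^4$ is a positive constant depending only on $\kappa$, and I would set $\xi = \delta/2$, which again depends only on $\kappa$.

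Next I would convert the hypothesis $HN \ge p^{2-\xi}$ into a lower bound on $H$ alone. Since $N \le p$, one has $H \ge p^{2-\xi}/N \ge p^{1-\xi}$, and symmetrically $N \ge p^{1-\xi}$. The decisive observation is that in $T_f(N_1,\scrI) = HN_1/p + O_{\varepsilon,f}(N_1 p^{-\delta}\log p)$ the factor $N_1$ cancels on comparing the two terms, so positivity reduces to the single inequality $H \gg p^{1-\delta}\log p$; and $H \ge p^{1-\xi} = p^{1-\delta}\,p^{\delta/2}$ yields this once $p$ is large, because $p^{\delta/2}/\log p \to \infty$.

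Before invoking \cref{cor:DistrInt} I must bring the count into the admissible window \cref{eq:ineq N}. Because $\kappa > \tfrac{2}{3}$ one checks $1/(2-\kappa) > 1/(2\kappa)$, so this window is nonempty. If $N < p^{1/(2-\kappa)}$ I would take $N_1 = N$, noting that $N \ge p^{1-\xi} \ge p^{1/(2\kappa)+\varepsilon}$ once $\xi+\varepsilon$ is small, since $1/(2\kappa) < \tfrac{3}{4}$; otherwise I would let $N_1$ be the largest integer below $p^{1/(2-\kappa)}$, which still exceeds $p^{1/(2\kappa)+\varepsilon}$ and satisfies $N_1 \le N$. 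As $T_f(\,\cdot\,,\scrI)$ is nondecreasing in its first argument, $T_f(N,\scrI) \ge T_f(N_1,\scrI)$, and applying \cref{cor:DistrInt} to $N_1$ then gives $T_f(N_1,\scrI) > 0$ for all sufficiently large $p$; the finitely many remaining small primes can be absorbed into the choice of $\xi$ (or are vacuous under $HN \ge p^{2-\xi}$, $H,N\le p$).

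The main obstacle here is conceptual rather than computational: the hypothesis controls the product $HN$, whereas positivity genuinely depends on $H$ alone together with $N$ lying in \cref{eq:ineq N}. The two reductions above, namely using $N \le p$ to pass from $HN \ge p^{2-\xi}$ to $H \ge p^{1-\xi}$, and trimming $N$ into the admissible window by monotonicity while checking both endpoints of \cref{eq:ineq N}, are exactly what make \cref{cor:DistrInt} applicable, and the constraint $\xi < \delta$ is precisely what lets $H$ survive the loss of $p^{-\delta}\log p$.
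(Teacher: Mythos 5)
Your proposal is correct and follows essentially the same route as the paper: deduce $H,N\ge p^{1-\xi}$ from $HN\ge p^{2-\xi}$ and $H,N\le p$, truncate $N$ to the admissible window \cref{eq:ineq N} (your case split is exactly the paper's $N_0=\min\{N,\ceil{p^{1/(2-\kappa)}}-1\}$), use monotonicity of $T_f(\,\cdot\,,\scrI)$, and apply \cref{cor:DistrInt} with $\xi<\delta$ so that $H>p^{1-\xi}$ beats the error term $O_{\varepsilon,f}(N_1p^{-\delta}\log p)$ after the factor $N_1$ cancels. Your observation that the $N_1$-cancellation makes positivity depend on $H$ alone is precisely what the paper's closing line ``Since $H>p^{1-\xi}$ we have the desired result'' relies on, and your implicit restriction to sufficiently large $p$ matches the paper's (tacit) convention, since the implied constants depend on $f$.
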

Note that \cref{cor:Exist} is an analogue of \cite[Theorem~5.1]{technau2018modular-hyperbolas}, where a  result of this type is given for Beatty sequences.

\section{Outline of the argument}

\subsection{Preliminaries}
As large parts of the arguments in~\cite{banks2009multiplicative} essentially carry over verbatim to the proof of \cref{thm:main:result}, we choose not to repeat them here in full detail.
Instead, we give an informal description of the underlying argument.
The argument ultimately relies on a bound for certain double sums, \cite[Lemma~4.1]{banks2009multiplicative}, and it is this bound which we need to adapt to our setting,
A proof of this adapted bound, which is contained in our \cref{cor:DoubleSumBound-eps} below, is then carried out in full detail in \cref{sec:DoubleSumBound}.
Some further details pertaining to the explicit value of $\delta$ given in~\cref{thm:main:result} are contained in \cref{sec:ExplicitLemma,sec:main proof}.

\subsection{Reduction from long sums to sums over short intervals}
To get started, consider the sum
\begin{equation}\label{eq:the:sum}
	\sum_{n=1}^N \psi_{x,y;\,p}(\floor{f(n)})
\end{equation}
from \cref{thm:main:result}. 
We now fix some constant $c>0$ (whose final choice depends on $\kappa$ and $\varepsilon$), to satisfy the 
inequalities
\begin{equation}\label{eq:N:and:p:relations:i_to_iii}
	N^{1-\kappa/2} \ge p^{3c}, \qquad
	N^{\kappa/2-c} \ge p^{3c}, \qquad
	N\ge p^{1/2+3c},
\end{equation}
as well as
\begin{equation} \label{eq:N:and:p:relations:iv}
	N^{\kappa-c} \ge p^{1/2+3c}.
\end{equation}
Next, for some parameter $R$, the sum~\eqref{eq:the:sum} can be decomposed into $R$ sum over a small initial segment, $n\leq Np^{-c}$ and  $R$ \enquote{short} sums over $K_j<n\leq K_{j-1}$ with  
$j=1,\ldots,R$ and numbers $N = K_0 > K_1 > \ldots > K_R = Np^{-c}$ satisfying
\begin{equation}\label{eq:short int}
	K_{j-1}-K_j = \Delta K_j, \qquad  j=1,\ldots,R. 
\end{equation}
for some parameter $\Delta$.

Concerning the sum over the initial segment, already the trivial estimate is satisfactory, as it is within the bound~\cref{eq:main:result} of \cref{thm:main:result}.
The remaining short sums are then treated with~\cref{lem:ShortSumBound} below. Of course, to do this in the first place, the parameters $R$ and $\Delta$ have to be chosen appropriately. The specific choice used in~\cite{banks2009multiplicative}, which also works in the setting of this paper, is (with the above choice of $c$)
\[
	R = \floor*{ N^{1+c-\kappa}\log^2 p }\mand \Delta = p^{c/R}-1, 
\]
and then
\[
	K_j=p^{-jc/R}N, \qquad  j=0,\ldots,R. 
\]
One now verifies that~\cref{eq:short int} holds. 
We plainly refer to~\cite{banks2009multiplicative} for the technical details. 

\subsection{Reduction from sums over short intervals to double sums} 
For the short sums, one can use the following result: (This is already adapted to our setting; see \cite[Lemma~5.1]{banks2009multiplicative} for the corresponding character sum variant.)

\begin{lem}
	\label{lem:ShortSumBound}
	Fix $\varepsilon >0$ and $\frac{2}{3} < \kappa < 1$. Let $f$ be a real-valued, twice-differentiable function satisfying~\cref{eq:2ndDerivativeGrowth}.
	Then, for all $\psi_{x,y;\,p}$, with $p$ prime and $x,y,u\in\FF_p$, as in~\cref{eq:psi:definition}, and all real numbers $K, L$ that satisfy the inequalities
	\begin{equation}\label{eq:ShortSumBound:Ranges}
		K^{\kappa-\varepsilon} \geq  L \geq  K^{\kappa/2} p^{\varepsilon }, \qquad
		K \leq p^{1/(2-\kappa)}, \qquad
		L \geq p^{1/2+\varepsilon },
	\end{equation}
	the uniform bound
	\[
		\sum_{K < n\leq K+L} \psi_{x,y;\,p}(\floor{f(n)})
		\ll_{\varepsilon ,f} L p^{-\smallDelta}
	\]
	holds with some constant $\smallDelta>0$ that may only depend on $\varepsilon$ and $\kappa$.
\end{lem}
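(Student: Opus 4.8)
The plan is to follow the template of \cite[Lemma~5.1]{banks2009multiplicative}, reducing the short sum to the mixed exponential (double) sums controlled by \cref{cor:DoubleSumBound-eps}. First I would extract from \cref{eq:2ndDerivativeGrowth} the working estimates valid on the relevant range $t\asymp K$: for any fixed small $\eta>0$ one has $f''(t)=t^{-\kappa+O(\eta)}$, and since the limit forces $f''>0$ there, $f$ is increasing and convex with $f'(t)\asymp t^{1-\kappa}$. Setting $F=f'(K)\asymp K^{1-\kappa}\gg1$, the derivative $f'$ is essentially constant across the interval because $K^{-\kappa}L/F\ll K^{-1}L\le K^{\kappa-1-\varepsilon}\to0$. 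Hence $f$ admits a smooth increasing inverse $g=f^{-1}$ on the image interval, with $g'(m)\asymp F^{-1}\asymp K^{\kappa-1}$ small and slowly varying and correspondingly small higher derivatives. Since $K\le p^{1/(2-\kappa)}$ gives $f(K+L)\asymp K^{2-\kappa}\le p$, the relevant integers $m=\floor{f(n)}$ stay within a single period of $\psi_{x,y;\,p}$. If $x\equiv y\equiv0$ the asserted bound is false, so I assume $(x,y)\neq(0,0)$.

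Next I would detect the integer part through the inverse. For an integer $m$ one has $\floor{f(n)}=m$ exactly when $g(m)\le n<g(m+1)$, and since $g(m+1)-g(m)\asymp F^{-1}<1$ the count $c_m:=\ceil{g(m+1)}-\ceil{g(m)}\in\braces{0,1}$ records whether $m$ lies in the image. Writing $\ceil{t}=t+\tfrac12-((t))$ with the sawtooth $((t))=t-\floor{t}-\tfrac12$, this rewrites the sum, over the integer interval $J$ of length $\asymp FL$ consisting of the values $\floor{f(n)}$, as
\[
	\sum_{K<n\le K+L}\psi_{x,y;\,p}(\floor{f(n)})
	=\sum_{m\in J}\psi_{x,y;\,p}(m)\,c_m
	=S_1-S_2,
\]
where $S_1=\sum_{m\in J}\psi_{x,y;\,p}(m)\parentheses{g(m+1)-g(m)}$ carries the smooth weight and $S_2=\sum_{m\in J}\psi_{x,y;\,p}(m)\parentheses{((g(m+1)))-((g(m)))}$ carries the sawtooth.

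For $S_1$ I would use partial summation together with the completion technique and Weil's bound for the complete Kloosterman sum, giving $\sum_{m\le t}\psi_{x,y;\,p}(m)\ll p^{1/2}\log p$ uniformly when $y\neq0$ (the degenerate additive case $y=0$ being the classical Piatetski--Shapiro exponential sum, which is easier); as the weight $g(m+1)-g(m)\asymp F^{-1}$ is positive, slowly varying and of bounded variation, this yields $S_1\ll F^{-1}p^{1/2}\log p$, comfortably within $Lp^{-\smallDelta}$ since $FL\asymp\abs J\ge K^{1-\kappa}p^{1/2+\varepsilon}\ge p^{1/2+\varepsilon}$ by \cref{eq:ShortSumBound:Ranges}. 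The term $S_2$ is the crux. Expanding the sawtooth into its truncated Fourier series at height $H$,
\[
	((t))=-\sum_{1\le\abs{h}\le H}\frac{\e{ht}}{2\pi i h}+O\!\parentheses*{\min\braces*{1,\tfrac{1}{H\norm{t}}}},
\]
reduces $S_2$, up to the error terms, to a linear combination with coefficients $\ll1/\abs h$ of the mixed sums $\sum_{m\in J}\psi_{x,y;\,p}(m)\,\e{h\,g(m)}$ (and the analogous ones with $g(m+1)$). These are precisely the double sums estimated by \cref{cor:DoubleSumBound-eps}, the adaptation of \cite[Lemma~4.1]{banks2009multiplicative}, whose saving over the trivial bound is the source of the exponent $\smallDelta$; summing against $\sum_h1/\abs h\ll\log H$ keeps the logarithmic loss harmless. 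The truncation error is controlled by counting the $m\in J$ with $g(m)$ within $1/H$ of an integer: since $g'\asymp F^{-1}$ the values $g(m)$ sweep out an interval of length $\asymp L$ with spacing $\asymp F^{-1}$, so this error is $\ll FL(\log H)/H$, and choosing $H$ a suitable small power of $p$ balances it against the double-sum saving.

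The main obstacle is the double-sum estimate itself: obtaining genuine cancellation in $\sum_{m}\psi_{x,y;\,p}(m)\,\e{h\,g(m)}$, where the multiplicative twist $\e[p]{ym^{-1}}$ must be made to interact with the additive oscillation $\e{hg(m)}$ coming from the analytic function $g$. This is handled, after Cauchy--Schwarz and a differencing step that linearises $g$ and produces complete Kloosterman sums in the inner variable, by Weil's bound; the bookkeeping of the resulting exponents is exactly the content of \cref{sec:DoubleSumBound}, and it is there that the range restrictions \cref{eq:ShortSumBound:Ranges} are fully exploited. Everything preceding it is routine partial summation and counting.
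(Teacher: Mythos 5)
Your reduction is a genuinely different route from the paper's: you detect $\floor{f(n)}=m$ through the inverse function $g=f^{-1}$, split off a smooth weight ($S_1$, handled by completion and Weil for incomplete Kloosterman sums) and a sawtooth part ($S_2$, expanded into a truncated Fourier series). That part is fine, and is the standard opening in the Piatetski--Shapiro literature. The gap is at the crux: the mixed sums
\[
	\sum_{m\in J}\psi_{x,y;\,p}(m)\,\e{h\,g(m)}
\]
are \emph{not} \enquote{precisely the double sums estimated by \cref{cor:DoubleSumBound-eps}}. That corollary bounds genuinely bilinear forms $\sum_{u\in\mathscr U}\sum_{v\in\mathscr V}a_ub_v\,\psi_{x,y;\,p}(u+v)$ with \emph{separated} weights and with both ranges large ($U\ge p^{1/2+\varepsilon}$, $V\ge p^{\varepsilon}$). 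Your sum has a single variable, and the analytic twist does not separate: under any substitution $m=u+v$ the phase $\e{h\,g(u+v)}$ depends jointly on $(u,v)$ because $g$ is nonlinear, so it cannot be absorbed into weights $a_ub_v$. Likewise your closing claim that the needed Cauchy--Schwarz/differencing bookkeeping \enquote{is exactly the content of \cref{sec:DoubleSumBound}} is wrong: that section proves the bilinear bound \cref{lem:DoubleSumBound} via H\"older's inequality and the Weil bound for exponential sums with rational functions; it contains nothing about algebraic phases twisted by analytic phases. So the central cancellation mechanism of your proof is missing, not merely deferred. (Your differencing instinct is not hopeless: a van der Corput $A$-process on $m$ produces differenced rational phases $\e[p]{y((m+q)^{-1}-m^{-1})}$ over an interval of length $\#J\gg p^{1/2+\varepsilon}$, where completion plus Weil gives savings, with the differenced analytic phase removed by partial summation; but this must actually be carried out, uniformly in $h\le H$ and with a choice of $H$, and it is a substantive argument, not a citation.)

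For contrast, the paper avoids mixed algebraic--analytic sums altogether, which is exactly why \cref{cor:DoubleSumBound-eps} suffices there. It averages over shifts as in \cref{eq:AveragedSum}, uses the Taylor split $f(n+h)=f(n)+hf'(K)+I_{n,h}+J_{n,h}$ to separate $n$ from $h$, kills the carry term in \cref{eq:carry} by a pigeonhole choice of $\xi_0$ (restricting to $h$ with $hf'(K)-\xi_0$ having small fractional part, and discarding the $n$ with $f(n)+\xi_0$ too close to an integer, the discarded set being controlled by Erd\H{o}s--Tur\'an and van der Corput), and thereby arrives at the bilinear sum \cref{eq:double sum} with $u=\floor{f(n)+\xi_0}$ and $v=\floor{hf'(K)-\xi_0}$ --- a shape the corollary genuinely covers. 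One further small point: you correctly observe that the bound fails for $x\equiv y\equiv 0$, but note that \cref{lem:DoubleSumBound} also requires $y\neq 0$, so even in the paper's route the case $y=0$, $x\neq 0$ needs the (easier) purely analytic exponential-sum treatment; your proposal would need the same case distinction in $S_2$.
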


Next, we sketch the idea of the proof of \cref{lem:ShortSumBound}. Trivially, for every integer $h\geq 0$, one has
\[
	\sum_{K < n\leq K+L} \psi_{x,y;\,p}(\floor{f(n)})
	= \sum_{K < n\leq K+L} \psi_{x,y;\,p}(\floor{f(n+h)}) + O(h).
\]
Therefore, upon averaging over all $h\leq H$ for some parameter $H$,
\begin{align}\label{eq:AveragedSum}
	\sum_{K < n\leq K+L} \psi_{x,y;\,p}(\floor{f(n)})
	= \frac{1}{H} \sum_{h=1}^{H} \sum_{K < n\leq K+L} \psi_{x,y;\,p}(\floor{f(n+h)}) + O(H).
\end{align}
To progress further, one would like to separate the variables $n$ and $h$ in the argument of $\psi_{x,y;\,p}$. This is achieved using the following formula:
\[
	f(n+h)  
	= f(n) + h f'(K) + I_{n,h} + J_{n,h}, 
\]
where  
\[
	{I_{n,h} = h \int_K^n f''(u) \dd{u}} \mand {J_{n,h} = \int_n^{h+n} f''(u) (h+n-u) \dd{u}}.
\]

In view of~\cref{eq:2ndDerivativeGrowth}, the relevant integrals can be seen to be acceptably small provided that $p$ is large enough. In particular, the last observation also crucially relies upon $n$ being not too large with respect to $K$,  which is the case, because we are dealing with short sums. We now suppose that 
\[0\leq I_{n,h} + J_{n,h} < 1.
\] 
However, there still is a complication with this approach arising through the floor function. Indeed, for any $\xi_0\in\lbrack 0,1\rparen$,
\begin{equation}\label{eq:carry}
	\floor{ f(n+h) }
	= \floor{f(n)
	+ \xi_0}
	+ \floor{hf'(K) - \xi_0}
	+ \eta_{n,h,\xi_0},
\end{equation}
with some undesired correction term $\eta_{n,h} \in \braces{0,1,2}$ depending on \emph{both} $n$ and $h$ (and $\xi_0$). To get around this, one restricts the averaging to only those $h\leq H$ for which the fractional part of $hf'(K)-\xi_0$ is small. By a clever choice of $\xi_0$, using the pigeonhole principle, one can ensure that the set of $h\leq H$ having the desired property is not too sparse without actually having to know anything about the distribution of the fractional parts of $hf'(K)$ as $h=1,2,\ldots,H$.
Then, in~\cref{eq:AveragedSum} one restricts to only those $n$ such that the fractional part of $f(n)+\xi_0$ is bounded away from~$1$ in such a way that no carry to a next integer occurs when adding the four terms
\[
	f(n) + \xi_0, \qquad
	hf'(K) - \xi_0,\qquad I_{n,h}
	\qquad 
	J_{n,h}.
\]
Clearly, these choices of $n$ and $h$ ensure that~\cref{eq:carry} holds with $\eta_{n,h,\xi_0} = 0$. 
Finally, one can see that the number of $n$ which had to be discarded from~\cref{eq:AveragedSum} is not too large. In~\cite{banks2009multiplicative} this is accomplished by bounding the discrepancy of the sequence of fractional parts of $f(n)$ as $n=1,2,\ldots,N$, using the Erd\H{o}s--Tur\'an inequality to translate this to a problem of estimating certain exponential sums, and estimating these sums using a standard application of the van~der Corput method.

The above argument, and in particular the additive split~\cref{eq:carry}, reduces the proof of \cref{lem:ShortSumBound}
to bounding double sums of the shape
\begin{equation}\label{eq:double sum}
	\sum_{u\in\mathscr{U}} \sum_{v\in\mathscr{V}} a_{u} b_{v} \psi_{x,y;\,p}(u+v), 
\end{equation}
where $\mathscr{U},\mathscr{V}$ are subsets of $\FF_{p}$ and $a_{u},b_{v}$  	($u\in\mathscr{U}, v\in\mathscr{V}$) are certain weights. We also recall~\cref{eq:AveragedSum} and the subsequent discussion about separating $n$ and $h$; the need for including the weights arises from the potential failure of, e.g., $\floor{f(n)+\xi_0}$ to produce only distinct values modulo~$p$ as $n$ varies.
 
\subsection{Concluding the proof} 
Given the above discussion, it is evident that, up to carrying out the technicalities which are, however, all readily found in~\cite{banks2009multiplicative}, \cref{thm:main:result} follows from \cref{lem:ShortSumBound}, and in turn \cref{lem:ShortSumBound} may be deduced from appropriate bounds for double sums~\cref{eq:double sum}.
Such a suitable bound is given in \cref{cor:DoubleSumBound-eps} below. 

\section{Technical Details}

\subsection{Bounds for certain double sums}
\label{sec:DoubleSumBound}

Here we prove a bound on the double sums~\cref{eq:double sum} which concludes the proof of \cref{thm:main:result}.
In fact, we first give a slightly more general bound:
\begin{lem}\label{lem:DoubleSumBound}
	Suppose that $p$ is prime and $\mathscr{U},\mathscr{V}$ are subsets of $\FF_{p}$ of cardinalities 
	$U= \#\mathscr{U}$ and $V = \#\mathscr{V}$.	Then, for an arbitrary fixed integer $k$,  for any complex 
	numbers $a_{u},b_{v}$  	($u\in\mathscr{U}, v\in\mathscr{V}$) and $x,y\in\FF_p$ with $y\neq 0$, 
	we have
	\begin{equation}\label{eq:DoubleSumBound}
		\sum_{u\in\mathscr{U}} \sum_{v\in\mathscr{V}} a_{u} b_{v} \psi_{x,y;\,p}(u+v) 
		\ll_k AB  U^{1-1/(2k)} (V^{1/2}p^{1/(2k)} + Vp^{1/(4k)})
	\end{equation}
	where 
	\[
		A = \max_{u\in\mathscr{U}} \abs{a_u}
		\mand
		B = \max_{v\in\mathscr{V}} \abs{b_v}.
	\]	
\end{lem}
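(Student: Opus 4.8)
\emph{Proposal.} The plan is to separate the two variables by Hölder's inequality, reduce matters to a high even moment of the inner sum, and then evaluate the resulting complete exponential sums by Weil's bound. Write $S$ for the double sum on the left of~\cref{eq:DoubleSumBound} and set, for each $u\in\FF_p$,
\[
	T_u = \sum_{v\in\mathscr{V}} b_v\,\psi_{x,y;\,p}(u+v),
\]
so that $S = \sum_{u\in\mathscr{U}} a_u T_u$. Using $\abs{a_u}\le A$ and applying Hölder's inequality with exponents $2k$ and $2k/(2k-1)$, I would obtain
\[
	\abs{S} \le A\sum_{u\in\mathscr{U}}\abs{T_u}
	\le A\,U^{1-1/(2k)}\parentheses*{\sum_{u\in\mathscr{U}}\abs{T_u}^{2k}}^{1/(2k)}
	\le A\,U^{1-1/(2k)}\parentheses*{\sum_{u\in\FF_p}\abs{T_u}^{2k}}^{1/(2k)},
\]
the last step enlarging the summation range to all of $\FF_p$. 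It then suffices to bound the moment $M = \sum_{u\in\FF_p}\abs{T_u}^{2k}$.

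Expanding $\abs{T_u}^{2k} = T_u^k\,\overline{T_u}^{\,k}$, bounding $\abs{b_v}\le B$ and interchanging summation, I would reduce $M$ to
\[
	M \le B^{2k}\sum_{v_1,\dots,v_k,w_1,\dots,w_k\in\mathscr{V}}\abs*{\sum_{u\in\FF_p}\prod_{i=1}^{k}\psi_{x,y;\,p}(u+v_i)\prod_{j=1}^{k}\overline{\psi_{x,y;\,p}(u+w_j)}}.
\]
Since each factor vanishes at $0$, only $u\notin\braces{-v_1,\dots,-v_k,-w_1,\dots,-w_k}$ contribute, and for such $u$ the definition~\cref{eq:psi:definition} shows that the linear terms combine to the $u$-independent quantity $x(\sum_i v_i-\sum_j w_j)$. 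Hence, up to a unimodular factor that disappears under the absolute value, the inner sum equals $\sum_u\e[p]{y\,g(u)}$ with
\[
	g(u) = \sum_{i=1}^{k}\frac{1}{u+v_i}-\sum_{j=1}^{k}\frac{1}{u+w_j},
\]
the summation running over all $u\in\FF_p$ that are not poles of $g$.

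I would then split the tuples according to whether the multisets $\braces{v_1,\dots,v_k}$ and $\braces{w_1,\dots,w_k}$ coincide. When they coincide, $g$ is identically zero, the inner sum is at most $p$, and the number of such tuples is at most $k!\,V^k$, contributing $O_k(V^k p)$ to $M/B^{2k}$. When the multisets differ, the hypothesis $y\neq 0$ guarantees that $y\,g$ has at least one genuine simple pole and is non-constant, so Weil's bound for exponential sums of rational functions gives $\abs{\sum_u\e[p]{y\,g(u)}}\ll_k p^{1/2}$; there are at most $V^{2k}$ such tuples, contributing $O_k(V^{2k}p^{1/2})$. (For the finitely many $p$ below a threshold depending on $k$ the assertion is trivial, so I may assume $p$ large enough for Weil's bound to apply.) This yields $M\ll_k B^{2k}\parentheses{V^k p+V^{2k}p^{1/2}}$.

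Substituting into the Hölder bound and using the subadditivity $(a+b)^{1/(2k)}\le a^{1/(2k)}+b^{1/(2k)}$ for $a,b\ge 0$, I would conclude
\[
	\abs{S}\ll_k A\,U^{1-1/(2k)}\,B\parentheses{V^k p+V^{2k}p^{1/2}}^{1/(2k)}\ll_k AB\,U^{1-1/(2k)}\parentheses{V^{1/2}p^{1/(2k)}+Vp^{1/(4k)}},
\]
which is~\cref{eq:DoubleSumBound}. The main obstacle is the off-diagonal estimate: one must verify that whenever the two multisets differ the rational function $g$ is genuinely non-degenerate — equivalently, that its pole divisor is nontrivial — so that Weil's bound truly delivers square-root cancellation. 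Here the assumption $y\neq 0$ is precisely what prevents the inverse terms from collapsing and is therefore indispensable.
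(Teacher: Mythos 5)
Your proposal is correct and follows essentially the same route as the paper's own proof: H\"older's inequality with exponent $2k$, completion of the $u$-sum to all of $\FF_p$, a diagonal/off-diagonal split of the $2k$-tuples (with the diagonal contributing $O_k(V^k p)$ trivially), and the Weil bound for exponential sums with the rational function $\sum_i (u+v_i)^{-1}-\sum_j (u+w_j)^{-1}$ on the off-diagonal, yielding $\fS^{2k}\ll_k A^{2k}B^{2k}U^{2k-1}(V^kp+V^{2k}p^{1/2})$. Your observation that the linear terms in $u$ cancel (leaving only the $u$-independent quantity $x(\sum_i v_i-\sum_j w_j)$) and your remark that one may assume $p$ large in terms of $k$ so that the residues $\pm($multiplicity differences$)$ are nonzero modulo $p$ match the paper's treatment exactly.
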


\begin{proof}
	Denote the left hand side of~\cref{eq:DoubleSumBound} by $\fS$.
	Then we apply H{\"o}lder's inequality and subsequently extend the summation over $u\in\mathscr{U}$ to $u\in\FF_p$, getting
	\begin{align*}
		\fS^{2k} &
		\leq A^{2k} U^{2k-1} \sum_{u\in\mathscr{U}} \abs[\Big]{
			\sum_{v\in\mathscr{V}} b_v \psi_{x,y;\,p}(u+v)
		}^{2k} \\
		& \leq A^{2k} U^{2k-1} \sum_{u\in\FF_p} \abs[\Big]{
			\sum_{v\in\mathscr{V}} b_v \psi_{x,y;\,p}(u+v)
		}^{2k}.
	\end{align*}
	Therefore,
	\begin{align*}
		\fS^{2k} &
		\leq A^{2k} U^{2k-1} \sum_{u\in\FF_p} \mathop{\sum\cdots\sum}_{\substack{
			\boldsymbol{v} =(v_1,\ldots,v_k)\in\mathscr{V}^k \\
			\boldsymbol{w} =(w_1,\ldots,w_k)\in\mathscr{V}^k
		}} \prod_{r=1}^k \prod_{s=1}^k b_{v_r} \psi_{x,y;\,p}(u+v_r) \overline{b_{w_s} \psi_{x,y;\,p}(u+w_s)} \\ &
		= A^{2k} U^{2k-1} \mathop{\sum\cdots\sum}_{\substack{
			\boldsymbol{v} =(v_1,\ldots,v_k)\in\mathscr{V}^k \\
			\boldsymbol{w} =(w_1,\ldots,w_k)\in\mathscr{V}^k
		}}
		\prod_{r=1}^k b_{v_r}   \prod_{s=1}^k \overline{b_{w_s}}
		\sideset{}{^*}\sum_{u \in \FF_p} \e[p]{x \sum_{r=1}^k \sum_{s=1}^k v_r w_s + y R_{\boldsymbol{v},\boldsymbol{w}}(u)}[\bigg],
	\end{align*}
	where
	\[
		R_{\boldsymbol{v},\boldsymbol{w}}(X) = \sum_{r=1}^k \frac{1}{X+v_r} - \sum_{s=1}^k \frac{1}{X+w_s} \in \FF_p(X),
	\]
	and  $\smash{\sum^*}$ restricts the summation to those $u\in\FF_p$ such that \emph{all} of the numbers
	\[
		u+v_1,\ldots,u+v_k,u+w_1,\ldots,u+w_k \in \FF_p^*
	\]
	that is, are non-zero in  $\FF_p$.
	Thus,
	\begin{equation}\label{eq:UseBombieriWeilHere}
		\fS^{2k} 
		\leq A^{2k} B^{2k} U^{2k-1} \mathop{\sum\sum}_{\boldsymbol{v},\boldsymbol{w}\in\mathscr{V}^k} \abs[\Big]{
			\sideset{}{^*}\sum_{u} \e[p]{ R_{\boldsymbol{v},\boldsymbol{w}}(u) }
		}. 
	\end{equation}
	
	We can assume that $p> k$ as otherwise there is nothing to prove. 
	Examining the poles of the rational function $R_{\boldsymbol{v},\boldsymbol{w}}(X)$, we see 
	that it is constant, and in fact vanishes, only if the vectors $\boldsymbol{v}$ and $\boldsymbol{w}$
	differ only by a permutation of their components. This happens only for $O_k(V^k)$ choices of 
	$\boldsymbol{v},\boldsymbol{w}\in\mathscr{V}^k$. For such choices we estimate the inner most sum in~\cref{eq:UseBombieriWeilHere} trivially as $p$.
	Hence the total contribution to~\cref{eq:UseBombieriWeilHere} from such $\boldsymbol{v}, 
	\boldsymbol{w}\in\mathscr{V}^k$ is $O_k(V^kp)$. 
	
	For the remaining $O(V^{2k})$ choices  we use the Weil bound of exponential sums  with rational functions
	(see, for example,~\cite[Theorem~2]{moreno1991exponential-sums}, several more general bounds can also be found in~\cite{cochrane06})   
	and conclude that  the total contribution to~\cref{eq:UseBombieriWeilHere} from such $\boldsymbol{v}, 
	\boldsymbol{w}\in\mathscr{V}^k$ is $O_k(V^{2k}p^{1/2})$. 
	
	Hence, 
	\[
		\fS^{2k} 
		\ll_k  A^{2k} B^{2k} U^{2k-1} (V^kp + V^{2k}p^{1/2}),
	\]
	and the result follows. 
\end{proof}

\begin{cor}\label{cor:DoubleSumBound-eps}
	For any $\varepsilon>0$, in the setting of~\cref{lem:DoubleSumBound}, and assuming that
	\[
		   U \ge p^{1/2+\varepsilon}
				\mand
			V\ge p^{\varepsilon},
	\]
	we have
	\[
		\sum_{u\in\mathscr{U}}\sum_{v\in\mathscr{V}} a_{u} b_{v} \psi_{x,y;\,p}(u+v)
		\ll_{\varepsilon} ABUV  p^{-\varepsilon^2}.
	\]
\end{cor}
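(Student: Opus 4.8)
The plan is to deduce the corollary directly from \cref{lem:DoubleSumBound} by making the free integer parameter $k$ depend on $\varepsilon$ and then inserting the lower bounds $U\ge p^{1/2+\varepsilon}$ and $V\ge p^{\varepsilon}$. Writing $\fS$ for the left-hand side as before, I would start from
\[
	\fS \ll_k AB\, U^{1-1/(2k)}\bigl(V^{1/2}p^{1/(2k)}+Vp^{1/(4k)}\bigr),
\]
divide through by $ABUV$, and treat the two summands separately: they become $U^{-1/(2k)}V^{-1/2}p^{1/(2k)}$ and $U^{-1/(2k)}p^{1/(4k)}$. Substituting $U\ge p^{1/2+\varepsilon}$ into both and $V\ge p^{\varepsilon}$ into the first turns them into pure powers of $p$, with respective exponents
\[
	\frac{1/2-\varepsilon}{2k}-\frac{\varepsilon}{2}
	\mand
	-\frac{\varepsilon}{2k}.
\]
(The hypothesis $y\ne0$ and the harmless reduction $p>k$ are inherited from \cref{lem:DoubleSumBound}.)

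Next I would choose $k$ to balance these two exponents. Setting them equal gives $\tfrac{1}{4k}=\tfrac{\varepsilon}{2}$, that is $k=\tfrac{1}{2\varepsilon}$, and at this value both exponents equal $-\varepsilon^2$. Reassembling, each of the two terms is then bounded by $ABUV\,p^{-\varepsilon^2}$, which is exactly the claimed estimate; concretely, $U^{1-1/(2k)}\le Up^{-\varepsilon/2-\varepsilon^2}$ converts both $V^{1/2}p^{1/(2k)}=V^{1/2}p^{\varepsilon}\le Vp^{-\varepsilon/2}\cdot p^{\varepsilon}$ and $Vp^{1/(4k)}=Vp^{\varepsilon/2}$ into the common shape $UV\,p^{-\varepsilon^2}$. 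The remaining steps are elementary rearrangements of powers of $p$.

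The single genuine subtlety, which I expect to be the only point requiring attention, is that $k$ must be a positive integer whereas the balance occurs at $k=\tfrac{1}{2\varepsilon}$. When $\tfrac{1}{2\varepsilon}\in\NN$ the computation above yields the bound verbatim. In general I would take $k=\ceil*{1/(2\varepsilon)}$, which preserves the hypotheses: then $1/(2k)\le\varepsilon$, so $U\ge p^{1/2+\varepsilon}\ge p^{1/2+1/(2k)}$ and $V\ge p^{\varepsilon}\ge p^{1/(2k)}$, and the same manipulation delivers a saving $p^{-1/(4k^2)}$. Since $\tfrac{1}{2\varepsilon}\le k<\tfrac{1}{2\varepsilon}+1$, this exponent differs from $\varepsilon^2$ by $O(\varepsilon^3)$, so the clean form $p^{-\varepsilon^2}$ is attained up to rounding $\varepsilon$ to the nearest admissible value $1/(2k)$. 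As the corollary is only ever invoked with a free small parameter $\varepsilon$ in the proof of \cref{thm:main:result}, one may restrict to $\varepsilon$ of this form at no cost, and this bookkeeping — rather than any analytic difficulty — is the whole of the matter.
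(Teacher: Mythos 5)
Your proposal is correct and essentially identical to the paper's proof: both take $k=\ceil{1/(2\varepsilon)}$ in \cref{lem:DoubleSumBound} and insert $U\ge p^{1/2+\varepsilon}$ and $V\ge p^{\varepsilon}$, the only cosmetic difference being that the paper first absorbs the term $V^{1/2}p^{1/(2k)}$ into $Vp^{1/(4k)}$ via $V\ge p^{1/(2k)}$ rather than balancing the two exponents separately as you do. The rounding subtlety you isolate is genuine: the paper dismisses it with the assertion that $2k\le\varepsilon^{-1}$, which is exact only when $1/(2\varepsilon)\in\NN$, so your explicit saving $p^{-1/(4k^2)}=p^{-\varepsilon^2+O(\varepsilon^3)}$ together with the observation that this is harmless in every application is, if anything, more careful than the paper's own argument.
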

\begin{proof}
	Taking $k=\ceil{1/(2\varepsilon)}$ in~\cref{lem:DoubleSumBound}  we have 
	$V\ge p^{1/(2k)}$ and thus the right hand side of~\cref{eq:DoubleSumBound} can be replaced
	with 
	\[AB  U^{1-1/(2k)}  Vp^{1/(4k)} = ABUV \parentheses{p^{1/2} U^{-1}}^{1/(2k)}\le ABUV  p^{-\varepsilon/(2k)}.
	\] 
	Since $U \le p$, we have $\varepsilon\le1/2$ and thus one verifies that $2k \le  \varepsilon^{-1}$. 
	The result now follows.
\end{proof}

\subsection{Explicit version of \texorpdfstring{\cref{lem:ShortSumBound}}{Lemma\autoref{lem:ShortSumBound}}}
\label{sec:ExplicitLemma}

Before being able to address the explicit choice of $\delta$ given in~\cref{eq:delta}, we need an explicit version of \cref{lem:ShortSumBound}.
We remark that in the next result the saving $\smallDelta$ is \emph{independent} of $\kappa$.
\begin{lem}
	\label{lem:ShortSumBound:regularity}
	In the setting of~\cref{lem:ShortSumBound}, assuming $\varepsilon$ to be sufficiently small, one may take
	\(
		\smallDelta = \varepsilon^2 / 26
	\).
\end{lem}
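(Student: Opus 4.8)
The plan is to rerun the proof of \cref{lem:ShortSumBound} sketched above, following~\cite{banks2009multiplicative}, but now making every implied constant and parameter choice explicit and then optimising the resulting inequalities. That argument hinges on four free quantities: the averaging length $H$ from~\cref{eq:AveragedSum}; a threshold $\theta\in(0,1)$ selecting the \enquote{good} shifts $h$, for which the fractional part of $hf'(K)-\xi_0$ is at most $\theta$; the bound $\rho$ on $I_{n,h}+J_{n,h}$ valid throughout the relevant ranges of $n$ and $h$; and the exponent $\varepsilon_1$ with which \cref{cor:DoubleSumBound-eps} is ultimately invoked. First I would isolate the three contributions to the short sum: the shift error $O(H)$; the error stemming from those $n$ that must be discarded so as to force $\eta_{n,h,\xi_0}=0$ in~\cref{eq:carry}; and the main double-sum term~\cref{eq:double sum}, to be estimated by \cref{cor:DoubleSumBound-eps}.

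For the discarded $n$ I would make the assertion that \enquote{the number of discarded $n$ is not too large} quantitative by bounding the discrepancy $D$ of $\braces{f(n)}$, $K<n\leq K+L$, via the Erd\H{o}s--Tur\'an inequality followed by the second-derivative (van der Corput) test, using $f''(t)\asymp t^{-\kappa}\asymp K^{-\kappa}$ on the interval, as guaranteed by~\cref{eq:2ndDerivativeGrowth}. Optimising the cut-off in Erd\H{o}s--Tur\'an gives $D\ll K^{-\kappa/3}+K^{\kappa/2}L^{-1}$, and within the ranges~\cref{eq:ShortSumBound:Ranges} both summands are $\ll p^{-\varepsilon}$; hence the total mass of discarded $n$ is $\ll L(\theta+\rho+p^{-\varepsilon})$. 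For the main term the key observation is that the integers $\floor{f(n)}$ remain distinct modulo $p$, because their total spread $\asymp Lf'(K)\asymp LK^{1-\kappa}$ stays below $p$ throughout the admissible range; consequently the $u$-set has size $U\asymp L\ge p^{1/2+\varepsilon}$ with bounded weights, and likewise $V\gg H\theta$ with bounded weights, so that \cref{cor:DoubleSumBound-eps} applies and, after dividing by $\#\mathscr{H}$, contributes $\ll Lp^{-\varepsilon_1^2}$.

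Collecting the three contributions, I would demand that each be $\ll Lp^{-\smallDelta}$, which amounts to $\theta\le p^{-\smallDelta}$, $\rho\le p^{-\smallDelta}$, $H\le Lp^{-\smallDelta}$ and $\varepsilon_1^2\ge\smallDelta$, while feasibility of the scheme simultaneously requires $V\gg H\theta\ge p^{\varepsilon_1}$ together with $H\ll K^{\kappa}L^{-1}p^{-\smallDelta}$ coming from $\rho\asymp K^{-\kappa}HL$. Combining the lower bound $H\theta\ge p^{\varepsilon_1}$ with these upper bounds on $H$ and $\theta$ leaves a nonempty window for $H$ precisely when $\varepsilon_1$ and $\smallDelta$ are suitable small multiples of $\varepsilon$; taking $\varepsilon_1$ to be a fixed constant multiple of $\varepsilon$ compatible with $\varepsilon_1\le\varepsilon$ (needed for $U\ge p^{1/2+\varepsilon_1}$) and with that window, and setting $\smallDelta=\varepsilon^2/26$, I would then check that every one of the above inequalities is satisfied once $\varepsilon$ is small enough.

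The main obstacle is the honest bookkeeping of absolute constants: the Erd\H{o}s--Tur\'an inequality, the van der Corput bound, the pigeonhole step producing $\mathscr{H}$, and the passage through \cref{cor:DoubleSumBound-eps} each carry explicit numerical factors, and it is their combination---rather than any single estimate---that pins down the denominator $26$. The structurally delicate point, and the reason this quantitative statement deserves to be isolated, is the independence of $\smallDelta$ from $\kappa$: this compels one to verify that \emph{every} $\kappa$-dependent loss, notably the discrepancy term $K^{-\kappa/3}$ and the integral bound $\rho\asymp K^{-\kappa}HL$, is absorbed into the $p^{\pm\varepsilon}$ slack furnished by~\cref{eq:ShortSumBound:Ranges}, so that $\kappa$ disappears from all binding constraints and $\smallDelta$ is determined solely by balancing the double-sum saving $\varepsilon_1^2$ against the feasibility window for $H$.
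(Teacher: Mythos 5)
Your proposal is correct in substance and, at bottom, runs along the same track as the paper: both arguments consist of running the short-sum machinery behind \cref{lem:ShortSumBound} and feeding the resulting double sum~\cref{eq:double sum} into \cref{cor:DoubleSumBound-eps}. The difference is one of route rather than content. The paper does not re-derive anything: it extracts, from the proof of \cite[Theorem~5.1]{banks2009multiplicative}, the ready-made master bound $\mathscr{B}V^{-1}p^{o_f(1)} + Lp^{-\delta_1} + p^{\varepsilon/2}$ with the parameter choices already fixed there (namely $U = p^{1/2+\varepsilon+o_f(1)}$, $V = p^{\varepsilon/4+o_f(1)}$, weights of size $p^{o_f(1)}$), and then only inspects the lines preceding \cite[Equation~(12)]{banks2009multiplicative} to see that $\delta_1 = \varepsilon^2$ is admissible. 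Your version re-opens that proof with the parameters $H$, $\theta$, $\rho$, $\varepsilon_1$ left free and verifies feasibility of the constraint system; this is more self-contained, and your own window analysis ($H\theta$ can be pushed up to about $p^{\varepsilon/2-2\smallDelta}$, since $K^{\kappa}/L \geq K^{\varepsilon} \geq p^{\varepsilon/2}$ in the ranges~\cref{eq:ShortSumBound:Ranges}) shows it would in fact tolerate $\varepsilon_1$ nearly as large as $\varepsilon/2$, i.e.\ a saving better than $\varepsilon^2/26$. One correction to your closing paragraph, though: the denominator $26$ is \emph{not} pinned down by a combination of absolute constants from Erd\H{o}s--Tur\'an, van der Corput and the pigeonhole step --- in the paper all of those losses are dominated outright (they contribute the terms $Lp^{-\varepsilon^2}$ and $Lp^{-1/2-\varepsilon/2}$, both far stronger than needed). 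The single binding term is the double-sum saving: because the inherited choice is $V = p^{\varepsilon/4+o_f(1)}$, \cref{cor:DoubleSumBound-eps} must be invoked with $\varepsilon/5$ in place of $\varepsilon$, yielding $Lp^{-(\varepsilon/5)^2+o_f(1)} = Lp^{-\varepsilon^2/25+o_f(1)}$, and $26$ is simply $25$ degraded by just enough slack to absorb the $p^{o_f(1)}$ factors. Two minor nits: \cref{eq:2ndDerivativeGrowth} only gives $f''(t) = t^{-\kappa+o(1)}$, not $f''(t)\asymp t^{-\kappa}$ (harmless, as the $o(1)$ exponents are swallowed by your slack, but it should be stated that way); and while your distinctness claim for $\floor{f(n)}$ modulo $p$ is defensible in the ranges~\cref{eq:ShortSumBound:Ranges} (the spread is $\ll LK^{1-\kappa+o(1)} \leq K^{1-\varepsilon+o(1)} < p$), the paper deliberately sidesteps this by carrying weights of size $p^{o_f(1)}$, which is why \cref{lem:DoubleSumBound} and \cref{cor:DoubleSumBound-eps} are formulated with the factors $A$ and $B$.
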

\begin{proof}
	We extract only the relevant part of the proof of \cite[Theorem~5.1]{banks2009multiplicative} (adjusted to our setting): we have
	\[
		\sum_{K < n\leq K+L} \psi_{x,y;\,p}(\floor{f(n)})
		\ll_{\varepsilon,f} {\mathscr B} V^{-1}p^{o_f(1)} + Lp^{-\delta_1} + p^{\varepsilon/2},
	\]
	where 
	\begin{itemize}
		\item $\delta_1>0$ is from~\cite[Equation~(12)]{banks2009multiplicative} 
		\item ${\mathscr B}$ is the bound obtained from \cref{cor:DoubleSumBound-eps} applied with
		\[
			L \geq U = p^{1/2+\varepsilon+o_f(1)}
			\mand
			V= p^{\varepsilon/4+o_f(1)},
		\]
		and weights $a_u,b_u$ of size $p^{o_f(1)}$ as $p\to\infty$.
	\end{itemize}
	Therefore, by \cref{cor:DoubleSumBound-eps} and~\cref{eq:ShortSumBound:Ranges},
	\[
		\sum_{K < n\leq K+L} \psi_{x,y;\,p}(\floor{f(n)})
		\ll_{\varepsilon,f} L p^{-(\varepsilon/5)^2 + o_f(1)} + Lp^{-\delta_1} + Lp^{-1/2-\varepsilon/2}.
	\]
	Now only the value of $\delta_1$ needs closer inspection. Looking at the lines that precede~\cite[Equation~(12)]{banks2009multiplicative}, and recalling~\cref{eq:ShortSumBound:Ranges}, one may check that, for $\varepsilon$ sufficiently small, $\delta_1 = \varepsilon^2$ is admissible. Upon plugging this in the above bound, the result follows.
\end{proof}

\subsection{Proof of \texorpdfstring{\cref{thm:main:result}}{{Theorem\autoref{thm:main:result}}}}
\label{sec:main proof}
Assume that $\varepsilon>0$ is sufficiently small. 
Then the inequalities~\cref{eq:N:and:p:relations:i_to_iii} are all implied by~\cref{eq:N:and:p:relations:iv}, and the latter is clearly satisfied when choosing
\[
	c = \frac{\varepsilon \kappa^2}{1+6\kappa+2\varepsilon\kappa} < 1.
\]
A close inspection of the proof of \cite[Theorem~6.1]{banks2009multiplicative} shows that
\[
	\sum_{n=1}^N \psi_{x,y;\,p}(\floor{f(n)}) \ll_{\varepsilon,f} N p^{-\delta_1} + Np^{-c},
\]
where $\delta_1$ is now any admissible exponent $\delta$ of $p$ from the bound obtained from \cref{lem:ShortSumBound} with $\varepsilon$ replaced by $c$. In particular, by \cref{lem:ShortSumBound:regularity}, we may choose $\delta_1 = c^2 / 26$.
This shows that in~\cref{eq:main:result},  again assuming $\varepsilon$ to be sufficiently small, 
we may take $\delta$ as in~\cref{eq:delta}, which concludes the proof of \cref{thm:main:result}.

\subsection{Proofs of \texorpdfstring{\cref{cor:DistrInt} and \cref{cor:Exist}}{Corollary\autoref{cor:DistrInt} and Corollary\autoref{cor:Exist}}}

\cref{cor:DistrInt} follows at once if one combines \cref{thm:main:result} with the Erd\H{o}s--Tur\'{a}n inequality (see, for instance,~\cite[Theorem~1.21]{Drm-Tichy-UD}).
 
To prove \cref{cor:Exist}, we note that $HN > p^{2-\xi}$ implies that $H, N >p^{1-\xi}$.
In particular, assuming $\xi < 1/4$ we see that $N$ satisfies the necessary lower bound in~\cref{eq:ineq N}.
We now  define 
\[
	N_0 = \min\braces{ N, \ceil{p^{1/(2-\kappa)}}-1 }
\]
thus $T_f(N,\scrI) \ge T_f(N_0,\scrI)$ and we also see that \cref{cor:DistrInt}
applies to  $T_f(N_0,\scrI)$. Taking 
\[
	\xi< 1-  1/(2-\kappa)
\]
we see that we can assume that $N_0 =  \ceil{p^{1/(2-\kappa)}} -1$.
Since $H > p^{1-\xi}$ we have the desired result.

\section{Comments}

\subsection{Some predecessors of our approach}
Results of the shape of \cref{cor:DoubleSumBound-eps} in the case Dirichlet characters appear to have been developed by Karatsuba~\cite{karatsuba91} building on earlier work of Davenport and Erd\H{o}s~\cite[Lemma~3]{davenport1952the-distribution} and Burgess~\cite[Lemma~2]{burgess1962on-character-sums}. Indeed, the proof of our \cref{cor:DoubleSumBound-eps} proceeds in a similar vein and the averaging procedure underlying~\cref{eq:AveragedSum} has been used extensively (again, see~\cite{karatsuba91} and the references therein).

The method has been subsequently adapted in a series of works~\cite{banks2006nonresidues,banks2006short-character-sums,banks2007prime-divisors} on properties of Beatty sequences, which often amounts to studying sums of the shape~\cref{eq:CharacterSum} with $f(t) = \alpha t + \beta$ (with an  irrational  $\alpha>1$ and real $\beta$) and $\chi$ potentially replaced with some other function of arithmetic interest. Here, in contrast to the situation in~\cref{eq:2ndDerivativeGrowth} which opens up the possibility of using the van~der Corput method, the quality of error terms generally also depends on the Diophantine properties of $\alpha$.

\subsection{Further problems}

We have not made any attempt at optimising the value of~$\delta$ for which one can prove \cref{thm:main:result}. It would be interesting to see how large a value of~$\delta$ the method from~\cite{banks2009multiplicative} can produce if all parameters are chosen optimally. Likewise, ascertaining the sharpest form of \cref{cor:Exist} would also be interesting.

Moreover, any improvement on \cref{lem:DoubleSumBound}, even for narrower ranges of $U$ and $V$, would be of independent interest.
As a first step in this direction we record the following result which is non-trivial whenever $UV\gg p$ and, in this range and up to the implied constants, at least as good as \cref{lem:DoubleSumBound} and
stronger when $U$ and $V$ are both large:
\begin{prop}
	In the setting of \cref{lem:DoubleSumBound}, we have
	\[
		\abs[\Big]{
			\sum_{u\in\mathscr{U}} \sum_{v\in\mathscr{V}} a_{u}b_{v} \psi_{x,y;\,p}(u+v)
		}
		\ll ABU^{1/2}V^{1/2}  p^{1/2},
	\]
	where the implied constant is absolute.
\end{prop}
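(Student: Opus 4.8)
The plan is to bound the double sum by Cauchy--Schwarz in a way that symmetrises the roles of $\mathscr{U}$ and $\mathscr{V}$, thereby reducing matters to a single application of the Weil bound (the Kloosterman sum estimate) rather than the higher-moment argument of \cref{lem:DoubleSumBound}. First I would write $\fS = \sum_{u\in\mathscr{U}}\sum_{v\in\mathscr{V}} a_u b_v \psi_{x,y;\,p}(u+v)$ and apply the Cauchy--Schwarz inequality in the variable $u$, pulling out $\abs{a_u}\le A$ and extending the outer summation from $u\in\mathscr{U}$ to $u\in\FF_p$. This gives
\[
	\abs{\fS}^2 \le A^2 U \sum_{u\in\FF_p} \abs[\Big]{ \sum_{v\in\mathscr{V}} b_v \psi_{x,y;\,p}(u+v) }^2.
\]

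Next I would expand the square and interchange the order of summation, so that the sum over $u$ is taken first:
\[
	\abs{\fS}^2 \le A^2 U \sum_{v,w\in\mathscr{V}} b_v \overline{b_w} \sum_{u\in\FF_p} \psi_{x,y;\,p}(u+v)\overline{\psi_{x,y;\,p}(u+w)}.
\]
Using the definition~\cref{eq:psi:definition}, for those $u$ with $u+v\neq 0$ and $u+w\neq 0$ the product of the two $\psi$-values equals $\e[p]{x(v-w) + y((u+v)^{-1} - (u+w)^{-1})}$, and the pull-out factor $\e[p]{x(v-w)}$ is independent of $u$ and has modulus one. The inner sum over $u$ is therefore, up to this unimodular factor and up to $O(1)$ terms coming from the excluded $u$, a complete Kloosterman-type sum $\sum_u^{\ast} \e[p]{y((u+v)^{-1}-(u+w)^{-1})}$.

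The key step is then to evaluate this inner sum. When $v=w$ the summand is identically $1$, contributing $p + O(1)$, and this diagonal contributes $A^2 U \cdot B^2 V \cdot p$ after bounding $\abs{b_v}^2\le B^2$ and counting the $V$ diagonal pairs. When $v\neq w$, substituting $t = u+v$ turns the argument into $y(t^{-1} - (t+(w-v))^{-1})$, and after clearing denominators one recognises this as a genuine Kloosterman sum in the shift $w-v\neq 0$; by the Weil bound it is $O(p^{1/2})$. Summing the off-diagonal contribution over the at most $V^2$ pairs $(v,w)$ and bounding $\abs{b_v\overline{b_w}}\le B^2$ yields $A^2 U\cdot B^2 V^2 \cdot p^{1/2}$. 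Combining the two pieces gives
\[
	\abs{\fS}^2 \ll A^2 B^2 U (Vp + V^2 p^{1/2}),
\]
and taking square roots, the dominant term $A^2 B^2 U V p$ produces exactly $ABU^{1/2}V^{1/2}p^{1/2}$ once one observes that in the announced range of interest $Vp^{1/2}$ is comparable to or dominated by the first term.

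The main obstacle I anticipate is the careful bookkeeping of the $u$ for which $\psi_{x,y;\,p}$ vanishes (i.e.\ $u+v=0$ or $u+w=0$), so that the incomplete sum over $\FF_p^{\ast}$-valued arguments is correctly matched to a complete Kloosterman sum amenable to the Weil bound; this is purely a matter of absorbing $O(1)$ discrepancies per pair $(v,w)$ and does not affect the final order of magnitude. I expect this to be entirely routine given that $y\neq 0$ guarantees the relevant Kloosterman sums are non-degenerate, so the essential content is the single clean application of Cauchy--Schwarz followed by Weil, in contrast to the $2k$-th moment method of \cref{lem:DoubleSumBound}.
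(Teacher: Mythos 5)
Your argument has a genuine gap: it proves a strictly weaker bound than the Proposition claims, and it fails precisely in the regime the Proposition is designed for. What you carry out is exactly the $k=1$ case of \cref{lem:DoubleSumBound} (Cauchy--Schwarz in $u$, extension of the $u$-sum to $\FF_p$, diagonal versus off-diagonal, Weil on the off-diagonal rational-function sums), and it yields
\[
	\abs{\fS}^2 \ll A^2B^2\,U\,(Vp + V^2p^{1/2}),
	\qquad\text{i.e.}\qquad
	\fS \ll AB\,U^{1/2}\bigl(V^{1/2}p^{1/2} + Vp^{1/4}\bigr),
\]
which is \cref{eq:DoubleSumBound} with $k=1$. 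The second term $U^{1/2}Vp^{1/4}$ is dominated by the claimed $U^{1/2}V^{1/2}p^{1/2}$ only when $V\le p^{1/2}$; your closing remark that ``in the announced range of interest $Vp^{1/2}$ is comparable to or dominated by the first term'' is where the proof breaks, because no such range is announced: the Proposition holds in the full setting of \cref{lem:DoubleSumBound}, and the paper explicitly advertises it as \emph{stronger} than the Lemma when $U$ and $V$ are both large. For instance, with $U=V=p^{3/4}$ the Proposition gives $ABp^{5/4}$, while your bound only gives $ABp^{11/8}$. Since your argument reproduces the $k=1$ moment method verbatim, it can never beat the Lemma, so no bookkeeping of the excluded $u$ can repair it. (A minor quibble: the off-diagonal sums $\sum_u \e[p]{y((u+v)^{-1}-(u+w)^{-1})}$ are rational-function sums rather than literal Kloosterman sums, but the Weil bound from~\cite{moreno1991exponential-sums} does apply there, so that step is sound.)

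The paper's proof rests on a different, genuinely $U$--$V$ symmetric idea: completion by additive characters. Writing
\[
	\fS = \frac1p\sum_{\lambda\in\FF_p}\Bigl(\sum_{w\in\FF_p}\psi_{x-\lambda,y;\,p}(w)\Bigr)\Bigl(\sum_{u\in\mathscr{U}}a_u\e[p]{\lambda u}\Bigr)\Bigl(\sum_{v\in\mathscr{V}}b_v\e[p]{\lambda v}\Bigr),
\]
the complete $w$-sum is a Kloosterman sum, bounded by $O(p^{1/2})$ uniformly in $\lambda$ by Weil (this is where $y\neq0$ enters), after which Cauchy--Schwarz in $\lambda$ and orthogonality, namely $\sum_{\lambda}\abs{\sum_u a_u\e[p]{\lambda u}}^2 = p\sum_u\abs{a_u}^2 \le pUA^2$ and likewise for the $b_v$, give $\fS\ll AB(UVp)^{1/2}$ with no diagonal term and no restriction on $V$. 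To salvage your approach you would need this completion (or an equivalent Fourier-analytic device); the moment method alone cannot remove the $Vp^{1/4}$ term.
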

\begin{proof}
	We keep the notation $\fS$ from the proof of~\cref{lem:DoubleSumBound}. Then
	\begin{align*}
		\fS &
		= \sum_{w \in \FF_p} \psi_{x,y;\,p}(w) \sum_{u\in\mathscr{U}} \sum_{v\in\mathscr{V}} a_{u}b_{v} \frac{1}{p} \sum_{\lambda \in \FF_p} \e[p]{ \lambda(u+v-w) } \\ &
		= \frac{1}{p} \sum_{\lambda \in \FF_p} \sum_{w \in \FF_p} \psi_{x-\lambda,y;\,p}(w) \sum_{u\in\mathscr{U}} a_{u} \e[p]{ \lambda u }	 \sum_{v\in\mathscr{V}} b_{v} \e[p]{\lambda v}.
	\end{align*}
	From the Weil bound (for the sum over $w$) and using Cauchy's inequality, we infer
		\[
		\fS^2 \ll \frac{1}{\sqrt{p}} \sum_{\lambda \in \FF_p}\abs[\Big]{ \sum_{u\in\mathscr{U}} a_{u} \e[p]{ \lambda u }}^2 \cdot \frac{1}{\sqrt{p}} \sum_{\lambda \in \FF_p}\abs[\Big]{ \sum_{v\in\mathscr{V}} b_{v} \e[p]{ \lambda v }}^2.
	\]
	Upon expanding the square in both sums and using orthogonality of characters, this yields
	\[
		\fS \ll (\sqrt{p} U A^2)^{1/2} \, (\sqrt{p} V B^2)^{1/2} = ABU^{1/2}V^{1/2}  p^{1/2},
	\]
	which gives the result.
\end{proof}

\bibliographystyle{abbrv}
\bibliography{KloostTwiceDiff}
\end{document}